\documentclass[11pt,twoside]{article}

\setlength{\textwidth}{160mm} \setlength{\textheight}{210mm}
\setlength{\parindent}{8mm} \frenchspacing
\setlength{\oddsidemargin}{0pt} \setlength{\evensidemargin}{0pt}
\thispagestyle{empty}
\usepackage{mathrsfs,amsfonts,amsmath,amssymb}
\usepackage{latexsym}
\pagestyle{myheadings}
\markboth{\centerline{\sc{\small }}}
{\centerline{\sc{\small
I.~D.~Shkredov}}}
\newtheorem{satz}{Theorem}

\newtheorem{theorem}[satz]{Theorem}
\newtheorem{lemma}[satz]{Lemma}
\newtheorem{definition}[satz]{Definition}
\newtheorem{corollary}[satz]{Corollary}
\newtheorem{remark}[satz]{Remark}

\def\_phi{\varphi}

\def\a{\alpha}

\def\la{\lambda}

\def\t{\tilde}

\def\ov{\overline}

\def\C{{\mathbb C}}
\def\R{{\mathbb R}}
\def\E{\mathsf {E}}

\def\Z_N{{\mathbb Z}_N}
\def\Z{{\mathbb Z}}

\def\Gr{{\mathbf G}}

\def\D{{\mathbb D}}

\def\Spec{{\rm Spec\,}}

\def\oT{{\rm T}}

\def\c{\circ}
\def\D{\Delta}
\def\Cf{{\mathcal C}}

\author{Shkredov I.D.}
\title{ On sums of Szemer\'{e}di--Trotter sets
\footnote{
This work was supported by grant
Russian Scientific Foundation RSF 14--11--00433.}
}
\date{}
\begin{document}
\maketitle

\begin{center}
 Annotation.
\end{center}

{\it \small
    We prove new general
    results
    on sumsets of sets
    having
    Szemer\'{e}di--Trotter type.
    This  family includes
    convex sets, sets with small multiplicative doubling, images of sets under convex/concave maps and others.
}
\\

\section{Introduction}
\label{sec:introduction}

Let $A=\{a_1,\dots,a_n\},\, a_i<a_{i+1}$ be  a set of real numbers.
We say that $A$ is \emph{convex} if
$$a_{i+1}-a_i>a_i-a_{i-1}$$ for every $i=2,\dots,n-1.$
Lower bounds for sumsets/difference sets of convex sets were obtained in several papers, see
\cite{h},
\cite{g1}, \cite{gk},
\cite{ENR}, \cite{ik}, \cite{soly},  \cite{ss2}, \cite{Li}, \cite{schoen_E_3} and others.
For example, in \cite{Li} the following theorem was proved.

\begin{theorem}
    Let $A\subset \R$ be a convex set. Then
$$
    |A+A| \gg |A|^{\frac{14}{9}} \log^{-\frac{2}{9}} |A| \,.
$$
\label{t:convex_sumset}
\end{theorem}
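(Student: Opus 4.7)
The plan is to derive Theorem \ref{t:convex_sumset} from two complementary estimates: an upper bound on a higher additive energy of a convex set via Szemer\'edi--Trotter, and a lower bound on $|A+A|$ in terms of that energy via a dyadic Pl\"unnecke/Cauchy--Schwarz argument.

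First, I would control the third moment energy $E_3(A):=\sum_x r_{A-A}(x)^3$, where $r_{A-A}(x)=|\{(a,b)\in A\m A:\, a-b=x\}|$. Since $A$ is convex, the consecutive differences $a_{i+1}-a_i$ are strictly monotone, so applying the Szemer\'edi--Trotter theorem to the point set $A\m A$ together with the family of monotone curves obtained by translating the graph of the piecewise-linear convex interpolant of $A$ yields the level-set bound $|\{x:\, r_{A-A}(x)\geqslant\tau\}|\ll|A|^2\tau^{-3}$ for $\tau\geqslant 1$. Dyadic summation over $\tau$ then gives $E_3(A)\ll|A|^3\log|A|$.

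Second, I would invoke a Schoen--Shkredov-type sumset--energy inequality of the shape $|A+A|^9\cdot E_3(A)^2\gg|A|^{20}$. Substituting the previous bound on $E_3(A)$ yields $|A+A|^9\gg|A|^{14}/\log^2|A|$, which is precisely Theorem \ref{t:convex_sumset}. Such an inequality is established by dyadic decomposition of $A-A$, isolating a popular level $P_\tau=\{x:\, r_{A-A}(x)\sim\tau\}$ with $\tau|P_\tau|\gg|A|^2/\log|A|$ via pigeonhole, then bounding $|P_\tau+A|$ by Pl\"unnecke--Ruzsa and running iterated Cauchy--Schwarz on $\sum_x r_{A+A}(x)\,r_{P_\tau+A}(x)$.

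The main obstacle is the second step. The naive chain $E_2(A)^2\leqslant|A|^2 E_3(A)$ together with $|A|^4\leqslant|A+A|\,E_2(A)$ only recovers Solymosi's exponent $3/2$ (up to a logarithmic factor). To reach $14/9$ one must exploit the full $\tau^{-3}$ decay of the level sets and iterate the energy/sumset interplay via an ``eigenvalue'' or operator-type argument in the spirit of Schoen--Shkredov, balancing two free parameters so that $14/9$ appears as the break-even exponent. Careful bookkeeping of the logarithmic losses at each dyadic step then yields the factor $\log^{-2/9}|A|$.
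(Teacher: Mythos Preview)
First, a framing point: the paper does not itself prove Theorem~\ref{t:convex_sumset}; it is quoted from \cite{Li} as prior work. The paper's own contribution is the stronger exponent $58/37$ (Theorem~\ref{t:convex}), obtained from Theorem~\ref{t:main} with $\a=2$ and $c(A)=|A|$. So there is no in-paper proof of the $14/9$ bound to compare against directly; one can only compare your sketch to the eigenvalue argument that yields the sharper result.

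That said, your proposal has a genuine gap at the second step. The inequality $|A+A|^{9}\,\E_3(A)^{2}\gg |A|^{20}$ that you propose to ``invoke'' is \emph{false} for general sets: for an arithmetic progression one has $|A+A|\asymp |A|$ and $\E_3(A)\asymp|A|^{4}$, so the left-hand side is only $\asymp|A|^{17}$. Consequently no purely additive toolkit --- Pl\"unnecke--Ruzsa, Cauchy--Schwarz, pigeonhole on a popular difference level --- can establish it; the Szemer\'edi--Trotter level-set decay $|\{x:(A\c A)(x)\ge\tau\}|\ll|A|^{3}\tau^{-3}$ must be used a \emph{second} time, beyond its role in the bound $\E_3(A)\ll|A|^{3}\log|A|$. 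You concede this in your final paragraph but never actually carry out that second application, so what remains is an inventory of ingredients rather than a proof.

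For orientation, the paper's route (for the sharper $58/37$) isolates the popular-sum set $S_1\subseteq A+A$, uses the operator inequality of Lemma~\ref{l:action_g} to reach
\[
|A|^{10}\,|A+A|^{-2}\ \ll\ \E_3(A)\,\E_3(A,A,S_1)\,,
\]
and then estimates the mixed energy $\E_3(A,A,S_1)$ via Lemma~\ref{l:E_3(A,A,B)} --- which is precisely where the SzT bound on $A$ is applied again, now to the interaction of $A$ with the auxiliary set $S_1$. Li's original $14/9$ argument is a simpler precursor in the same spirit, also requiring two separate appeals to the SzT structure. Your sketch gestures toward these objects (``eigenvalue or operator-type argument'') but stops short of the decisive second SzT step; filling it in is the whole content of the proof.
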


In our paper we obtain a series of results on sumsets/difference sets of rather general families of sets,
including convex sets,
see Theorems \ref{t:main}, \ref{t:main_diff}
below.
In particular, it allows us to refine the result above.

\begin{theorem}
    Let $A \subset \R$ be a convex set.
    Then
$$
    |A+A| \gg |A|^{\frac{58}{37}} \log^{-\frac{20}{37}} |A| \,.
$$
\label{t:convex}
\end{theorem}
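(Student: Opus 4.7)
The plan is to deduce Theorem~\ref{t:convex} from the general bound of Theorem~\ref{t:main} by verifying that every convex set belongs to the Szemer\'{e}di--Trotter family and then specializing the abstract exponents. The first step is to recall that for a convex set $A=\{a_1<\cdots<a_n\}$, the classical incidence bound
$$
|\{s\in\R : r_{A-A}(s)\geq \tau\}| \ll |A|^2/\tau^3, \qquad \tau\geq 1,
$$
holds: the pairs $(i,j)$ with $a_i-a_j=s$ correspond to incidences between horizontal lines and shifted copies of the strictly convex graph $\{(i,a_i)\}$, and Szemer\'{e}di--Trotter supplies the exponent~$3$. The symmetric statement for $r_{A+A}$ follows by reflection, placing $A$ inside the Szemer\'{e}di--Trotter family with a concrete parameter.

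The second step is to feed this input into Theorem~\ref{t:main}. In the background, the ST bound controls the third-moment energy $E_3^+(A)=\sum_s r_{A+A}(s)^3$; this is then combined with the Cauchy--Schwarz type inequality $(E^+(A))^2\leq |A+A|\cdot E_3^+(A)$ and the trivial $|A+A|\geq |A|^4/E^+(A)$ to produce a lower bound on $|A+A|$. Optimizing over the auxiliary threshold parameter $\tau$ balances the ST input of exponent~$3$ against these energy relations and yields the rational exponent $58/37$; the sharp form of this optimization is precisely what Theorem~\ref{t:main} encodes.

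The third step is logarithmic bookkeeping: the dyadic pigeonholing over level sets of $r_{A+A}$ used in Theorem~\ref{t:main}, together with the $\log|A|$ appearing in ENR-type averaging arguments, introduces a logarithmic factor at each scale, and carrying these losses through the same optimization that produced $58/37$ gives the exponent $-20/37$ on the logarithm. The principal obstacle lies not in this reduction but in Theorem~\ref{t:main} itself---establishing the appropriate interplay between $E^+$, $E_3^+$, and the ST hypothesis in the abstract setting with sharp constants. Granting that theorem, the convex case follows by direct substitution with ST parameter~$3$.
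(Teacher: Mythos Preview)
Your overall reduction is exactly the paper's: convex sets are SzT--type with $\alpha=2$ and $c(A)=|A|$ (Example~1 in Section~\ref{sec:preliminaries}), and substituting these into formula~(\ref{f:p_main_1}) of Theorem~\ref{t:main} gives
$|A+A|\gg |A|^{-21/37}\cdot|A|^{79/37}\log^{-20/37}|A|=|A|^{58/37}\log^{-20/37}|A|$.

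Two inaccuracies are worth fixing. First, the SzT--type hypothesis requires the level--set bound for \emph{arbitrary} $B$, namely $|\{x:(A*B)(x)\ge\tau\}|\ll |A|\,|B|^2\tau^{-3}$; the self--convolution bound you display is off by a factor of $|A|$ (with $B=\pm A$ one gets $|A|^3/\tau^3$, not $|A|^2/\tau^3$), and in any case the $B=\pm A$ case alone does not verify Definition~\ref{def:SzT-type}. Second, your informal summary of the mechanism behind Theorem~\ref{t:main} is not what actually happens: the inequality $(E^+(A))^2\le|A+A|\cdot E_3^+(A)$ you invoke is false (Cauchy--Schwarz gives $\E(A)^2\le|A|^2\,\E_3(A)$, and that route only yields $|A+A|\gg|A|^{3/2}$, far short of $58/37$). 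The true source of the exponent is the eigenvalue argument in (\ref{tmp:14.10.2014_5})--(\ref{tmp:14.10.2014_7}), combining a spectral lower bound for the operator $\t{\oT}^{S_1}_A$ with the refined estimate on $\E_3(A,A,S_1)$ from Lemma~\ref{l:E_3(A,A,B)}. Since you explicitly grant Theorem~\ref{t:main} none of this breaks the deduction of Theorem~\ref{t:convex}, but the heuristic you offer does not account for the stated exponent.
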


Moreover, our method gives a generalization of Theorem \ref{t:convex_sumset} for sumset of two {\it different} convex sets,
see Theorem \ref{t:main_diff} below.

In \cite{Li2} the authors prove a general statement on addition of a set and its image under a convex map
(the first result in the direction was obtained in \cite{ENR}).

\begin{theorem}
Let $f$ be any continuous, strictly convex or concave function on the reals, and $A,C\subset \R$ be any finite sets such that $|A| = |C|$.
Then
$$
    |f(A)+C|^{10} |A+A|^9 \gg |A|^{24} \log^{-2} |A| \,.
$$
In particular, choosing $C=f(A)$, we get
$$
    \max\{ |f(A)+f(A)|, |A+A| \} \gg |A|^{\frac{24}{19}} \log^{-\frac{2}{19}} |A| \,.
$$
Finally
$$
    |AA|^{10} |A+A|^9 \gg |A|^{24} \log^{-2} |A| \,.
$$
\label{t:LR-N}
\end{theorem}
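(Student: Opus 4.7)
My plan is to prove the main inequality $|f(A)+C|^{10}|A+A|^9 \gg |A|^{24}\log^{-2}|A|$ by an incidence-geometric argument of Elekes--Nathanson--Ruzsa type, and then obtain the two remaining statements by specialization. Throughout, write $n=|A|=|C|$.

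The starting setup is the family $\Gamma=\{\gamma_{a',c}\}_{a'\in A,\,c\in C}$ of $n^2$ curves in $\R^2$, where $\gamma_{a',c}$ is the graph of $y=f(x-a')+c$. For every triple $(a,a',c)\in A\times A\times C$, the point $(a+a',f(a)+c)$ lies on $\gamma_{a',c}$, and all such points belong to the Cartesian product $P\subseteq(A+A)\times(f(A)+C)$. Since $f$ is strictly convex (or concave), $f'(x-a_1')-f'(x-a_2')\ne 0$ whenever $a_1'\ne a_2'$, so $x\mapsto f(x-a_1')-f(x-a_2')$ is strictly monotone and hence any two curves of $\Gamma$ meet in at most one point. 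Thus $\Gamma$ is a family of pseudolines and the total number of incidences equals $n^3$.

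Plain Szemer\'edi--Trotter gives $n^3\ll(|P|\cdot n^2)^{2/3}+|P|+n^2$, which produces only the weak inequality $|A+A|\cdot|f(A)+C|\gg n^{5/2}$ with exponents $(1,1)$, rather than the desired $(9,10)$. To upgrade, I would dyadically decompose $P$ according to the richness $d(p)=|\{\gamma\in\Gamma:\ p\in\gamma\}|$ and use the rich-points version
$$|\{p\in P:\ d(p)\ge k\}|\ll \frac{n^4}{k^3}+\frac{n^2}{k},$$
together with the Cartesian product structure: for each first coordinate $x\in A+A$, exactly $r_{A+A}(x)\cdot|C|$ curves meet the fibre $\{x\}\times\R$, so the incidences distribute according to the representation function of $A+A$. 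A H\"older/pigeonhole step on the dyadic bands, combined with Pl\"unnecke--Ruzsa to convert $|A+A|$ into the sumset quantities appearing in the distribution, yields an upper bound on the additive energy of the shape $E^+(f(A),C)=\sum_s r_{f(A)+C}(s)^2 \ll n^{8/5}|A+A|^{9/10}\log^{1/5}n$. Plugging this into Cauchy--Schwarz in the form $|f(A)+C|\ge n^4/E^+(f(A),C)$ delivers the main inequality, the two logarithmic losses coming from the dyadic summation.

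The corollaries are immediate. Choosing $C=f(A)$ gives $|f(A)+f(A)|^{10}|A+A|^9\gg n^{24}\log^{-2}n$, whence $\max\{|f(A)+f(A)|,|A+A|\}\gg n^{24/19}\log^{-2/19}n$. For the sum--product version, take $f(x)=\log x$ on the positive part of $A$ (discarding at most one element) and $C=\log A$, so that $|f(A)+C|=|AA|$ and the inequality specialises to $|AA|^{10}|A+A|^9\gg n^{24}\log^{-2}n$. The main obstacle is precisely the passage from the naive pseudoline bound $|A+A|\cdot|f(A)+C|\gg n^{5/2}$ up to the sharp exponent pair $(9,10)$: this requires careful bookkeeping of how curve-richness at a point distributes between the two coordinates of $P$, coupled with Pl\"unnecke-type control of iterated sumsets of $A$, and is the main technical step of the argument.
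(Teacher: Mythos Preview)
First, note that the paper does not prove this theorem; it is quoted from \cite{Li2} as the result that Theorem~\ref{t:LR-N'} refines. The argument in \cite{Li2}, and the framework of the present paper, proceed differently from your proposal: the Szemer\'edi--Trotter bound for the curve family $y=f(x-a')+c$ is used \emph{once}, not to count incidences directly, but to establish that $A$ is an SzT--type set with $c(A)\le |f(A)+C|^2/|C|$ (Example~2 in Section~\ref{sec:preliminaries}). From that point on the argument is purely additive--combinatorial, passing through the energy inequalities of Lemma~\ref{l:SzT} (in particular $\E^3(A)\ll \E_{3/2}^2(A)\,c(A)|A|^2$) combined with a popular--value lower bound for $\E_{3/2}(A)$ in terms of $|A+A|$. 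No Pl\"unnecke--Ruzsa step appears.

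Your setup is correct and cleanly recovers the Elekes--Nathanson--Ruzsa bound $|A+A|\cdot|f(A)+C|\gg n^{5/2}$, but the ``upgrade'' paragraph is a wish list rather than a proof, and it has a real gap. You invoke Pl\"unnecke--Ruzsa ``to convert $|A+A|$ into the sumset quantities appearing in the distribution'', yet there is no small--doubling hypothesis anywhere in the statement, so there is nothing for Pl\"unnecke--Ruzsa to act on. Your fibre observation --- that above each $x\in A+A$ exactly $r_{A+A}(x)\cdot|C|$ incidences occur --- controls how incidences distribute among \emph{columns} of $P$, but says nothing about their concentration on individual \emph{points} $(x,y)$; and the richness of $(x,y)$ is $|\{(a,c)\in (A\cap(x-A))\times C: f(a)+c=y\}|$, which is not $r_{f(A)+C}(y)$, so the link to $E^+(f(A),C)$ is not the one you assert. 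Since the claimed bound $E^+(f(A),C)\ll n^{8/5}|A+A|^{9/10}\log^{1/5}n$ is exactly equivalent to the theorem, asserting it without a mechanism is circular. The missing ingredient is precisely the level--set bound of Definition~\ref{def:SzT-type} together with the higher--energy machinery; the rich--points count plus H\"older and Pl\"unnecke--Ruzsa do not supply it.
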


We refine the result.

\begin{theorem}
Let $f$ be any continuous, strictly convex or concave function on the reals, and $A,C\subset \R$ be any finite sets such that $|A| = |C|$.
Then
$$
    |f(A)+C|^{42} |A+A|^{37} \gg |A|^{100} \log^{-20} |A| \,.
$$
In particular, choosing $C=f(A)$, we get
$$
    \max\{ |f(A)+f(A)|, |A+A| \} \gg |A|^{\frac{100}{79}} \log^{-\frac{20}{79}} |A| \,.
$$
Finally
$$
    |AA|^{42} |A+A|^{37} \gg |A|^{100} \log^{-20} |A| \,.
$$
\label{t:LR-N'}
\end{theorem}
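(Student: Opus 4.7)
The plan is to derive Theorem \ref{t:LR-N'} as a direct corollary of the sumset result Theorem \ref{t:main_diff} applied to the pair $(f(A),C)$. The key preliminary observation is that strict convexity (or concavity) of $f$ forces the image $f(A)$ to be of Szemer\'edi--Trotter type: for every real $s$, the map $x\mapsto f(x+s)-f(x)$ is strictly monotone, so one can follow the standard reduction from \cite{ENR, Li2} and invoke the Szemer\'edi--Trotter incidence theorem on the family of curves associated with the graph of $f$. This provides the hypothesis needed to feed $f(A)$ into the main theorem. Note also that $|f(A)|=|A|=|C|$ by injectivity of $f$.

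With this in hand, I would apply Theorem \ref{t:main_diff} to the sets $B:=f(A)$ and $C$, where the role of the ``doubling'' in the bound is played by $|A+A|$ (the sumset of the preimage set, which controls the additive energy of $A$ entering through the proof of the main theorem). The exponents $42$, $37$, $100$ and the logarithmic loss $\log^{-20}|A|$ should emerge directly from the numerology of Theorem \ref{t:main_diff}, without any further combinatorial input at this stage.

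The second bound follows by specializing $C=f(A)$ and using the elementary inequality
\[
    x^{42}y^{37}\le (\max\{x,y\})^{79}\,,
\]
which converts the weighted product bound into the stated lower bound on $\max\{|f(A)+f(A)|,|A+A|\}$ with exponent $100/79$. For the last conclusion about $|AA|$, one takes $f=\log$ restricted to the positive part of $A$ (a dyadic pigeonhole argument handles signs and absorbs constants), so that $\log$ is strictly concave on $\R_{>0}$ and the bijection $\exp$ identifies $\log(A)+\log(A)$ with $AA$; substituting into the previous inequality yields $|AA|^{42}|A+A|^{37}\gg |A|^{100}\log^{-20}|A|$.

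The deduction itself is essentially formal once Theorem \ref{t:main_diff} is available, so the main obstacle is concentrated in the main theorem, not in this corollary. The genuine refinement over Theorem \ref{t:LR-N} (moving $24/19$ to $100/79$ and $14/9$ to $58/37$) has to come from a strengthened energy estimate for Szemer\'edi--Trotter type sets inside the proof of Theorem \ref{t:main_diff} --- most plausibly via a higher moment argument in the spirit of \cite{schoen_E_3}, using $\mathsf{E}_3(A)$ or an $\mathsf{E}_k$ hierarchy together with an asymmetric incidence count that exploits the two distinct sets $f(A)$ and $C$. Beyond that, one only needs Pl\"unnecke--Ruzsa to replace intermediate sumsets by $|A+A|$, and a careful tracking of logarithmic factors to produce the $\log^{-20}$ loss.
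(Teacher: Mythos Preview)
Your route does not lead to the theorem. You propose to feed the pair $(f(A),C)$ into Theorem \ref{t:main_diff}, but that result requires \emph{both} inputs to be of SzT--type, and $C$ is an arbitrary set; moreover Theorem \ref{t:main_diff} outputs a lower bound on $|f(A)\pm C|$, not on $|A+A|$, so the quantity $|A+A|$ would never appear. The remark that ``the role of the doubling is played by $|A+A|$'' has no hook in the statement of Theorem \ref{t:main_diff}, and there is no mechanism in that theorem by which the preimage sumset enters.

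The paper's deduction is different and shorter: it uses Theorem \ref{t:main} (the single--set bound), applied to $A$ itself. The point you are missing is Example~2 in Section~\ref{sec:preliminaries}: when $f$ is strictly convex/concave, it is $A$ --- not only $f(A)$ --- that has SzT--type with $\alpha=2$, and its constant is
\[
    c(A)=q(f(A))=\min_{C'} \frac{|f(A)+C'|^2}{|C'|} \le \frac{|f(A)+C|^2}{|C|}=\frac{|f(A)+C|^2}{|A|} \,.
\]
Substituting this upper bound for $c(A)$ into the $\alpha=2$ case of Theorem \ref{t:main},
\[
    |A+A| \gg c(A)^{-21/37}\, |A|^{79/37}\, (\log|A|)^{-20/37},
\]
immediately gives $|A+A|^{37}|f(A)+C|^{42}\gg |A|^{79+21}(\log|A|)^{-20}$, which also explains the numerology $42=2\cdot 21$, $100=79+21$, and the exponent $20$ on the logarithm. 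Your second and third paragraphs (the $\max$ bound via $x^{42}y^{37}\le(\max\{x,y\})^{79}$, and the $|AA|$ case via $f=\log$, or equivalently via Example~3 with $c(A)=|AA|^2/|A|$) are fine once the first inequality is in place. No Pl\"unnecke--Ruzsa step and no separate asymmetric incidence argument are needed here; all the asymmetry between $f(A)$ and $C$ is absorbed into the single inequality $c(A)\le |f(A)+C|^2/|A|$.
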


Another applications can be found in the last section \ref{sec:proof}.

In the proof we use so--called the eigenvalues method, see e.g. \cite{s_mixed} and some observations from \cite{Sh_ineq}.


\section{Notation}
\label{sec:definitions}

Let $\Gr$ be an abelian group and $+$ be the group operation.
In the paper we use the same letter to denote a set $S\subseteq \Gr$
and its characteristic function $S:\Gr \rightarrow \{0,1\}.$
By $|S|$ denote the cardinality of $S$.

Let $f,g : \Gr \to \C$ be two functions with finite supports.
Put
\begin{equation}\label{f:convolutions}
    (f*g) (x) := \sum_{y\in \Gr} f(y) g(x-y) \quad \mbox{ and } \quad
        (f\circ g) (x) := \sum_{y\in \Gr} f(y) g(y+x)
\end{equation}

Let $A_1,\dots, A_k \subseteq \Gr$ be any sets.
Put
\begin{equation}\label{f:E_k_preliminalies_B}
    \E_k (A_1,\dots,A_k)=\sum_{x\in \Gr} (A_1 \c A_1)(x) \dots (A_k \c A_k)(x)
\end{equation}
be {\it the higher energy}  of $A_1,\dots, A_k$.
If $A_j=A$, $j=1,\dots, k$ we simply write $\E_k (A)$ instead of $\E_k (A,\dots, A).$
In the same way one can define $\E_k (A)$  for non--integer $k$.
In particular case $k=2$ we put $\E(A,B):= \E_2 (A,B)$ and $\E(A) = \E_2 (A)$.
The quantity $\E(A)$ is called {\it the additive energy} of a set, see e.g. \cite{TV}.
Similarly, we
define
\begin{equation}\label{f:E_k_preliminalies_B_func}
    \E_k (f_1,\dots,f_{k}) = \sum_x (f_1 \c f_1) (x) \dots (f_k \c f_k) (x) \,.
\end{equation}
Denote by
$
\Cf_{k+1} (f_1,\dots,f_{k+1}) (x_1,\dots, x_{k})
$
the function
$$
    \Cf_{k+1} (f_1,\dots,f_{k+1}) (x_1,\dots, x_{k}) = \sum_z f_1 (z) f_2 (z+x_1) \dots f_{k+1} (z+x_{k}) \,.
$$
Thus, $\Cf_2 (f_1,f_2) (x) = (f_1 \circ f_2) (x)$.
If $f_1=\dots=f_{k+1}=f$ then write
$\Cf_{k+1} (f) (x_1,\dots, x_{k})$ for $\Cf_{k+1} (f_1,\dots,f_{k+1}) (x_1,\dots, x_{k})$.
Note that
\begin{equation}\label{f:E_k_via_C}
    \sum_{x_1,\dots,x_k} \Cf^2_{k+1} (f_1,\dots,f_{k+1}) (x_1,\dots, x_{k})
        =
            \E_{k+1} (f_1,\dots,f_{k+1}) \,.
\end{equation}

Let $g : \Gr \to \C$ be a function, and $A\subseteq \Gr$ be a finite set.
By $\oT^{g}_A$ denote the matrix with indices in the set $A$
\begin{equation}\label{def:operator1}
    \oT^{g}_A (x,y) = g(x-y) A(x) A(y) \,.
\end{equation}
It is easy to see that $\oT^{g}_A$ is hermitian iff $\ov{g(-x)} = g(x)$.
The corresponding action of $\oT_A^g$ is
$$
    \langle \oT^{g}_A a, b \rangle = \sum_z g(z) (\ov{b} \c a) (z) \,.
$$
for any functions $a,b : A \to \C$.
In the case $\ov{g(-x)} = g(x)$ by $\Spec (\oT^{g}_A)$ we denote the spectrum of the operator $\oT^{g}_A$
\begin{equation}\label{f:Spec_ordering}
    \Spec (\oT^{g}_A) = \{ \mu_1 \ge \mu_2 \ge \dots \ge \mu_{|A|} \} \,.
\end{equation}
Write  $\{ f \}_{\a}$, $\a\in [|A|]$ for the corresponding eigenfunctions.
We call
$\mu_1$ as
the main eigenvalue and $f_1$ as the main function.

In the asymmetric case
let $g : \Gr \to \C$ be a function, and $A,B\subseteq \Gr$ be two finite sets.
Suppose that $|B| \le |A|$.
By $\oT^{g}_{A,B}$ denote the rectangular matrix
\begin{equation}\label{def:operator1'}
    \oT^{g}_{A,B} (x,y) = g(x-y) A(x) B(y) \,,
\end{equation}
and by $\t{\oT}^{g}_{A,B} (x,y)$ denote the another rectangular  matrix
\begin{equation}\label{def:operator2'}
    \t{\oT}^{g}_{A,B} (x,y) = g(x+y) A(x) B(y) \,.
\end{equation}
As in (\ref{f:Spec_ordering}), we arrange the singular values in order of magnitude
$$
    \la_1 (\oT^{g}_{A,B}) \ge \la_2 (\oT^{g}_{A,B}) \ge \dots \ge \la_{|B|} (\oT^{g}_{A,B}) \ge 0 \,,
$$
$$
    \oT^{g}_{A,B} (x,y) = \sum_{j=1}^{|B|} \la_j u_j (x) v_j (y)
$$
and similar for $\t{\oT}^{g}_{A,B}$.
Here $u_j(x)$, $v_j (y)$ are singularfunctions of the
operators.
General theory of such operators was developed in \cite{s_mixed}.

All logarithms are base $2.$ Signs $\ll$ and $\gg$ are the usual Vinogradov's symbols.

\section{The main definition}
\label{sec:preliminaries}

We begin with a rather general definition of families of sets which are usually  obtained by Szemer\'{e}di--Trotter's theorem, see \cite{TV}.

\begin{definition}
    A set $A\subset \R$
    has
    {\bf SzT--type}
    (in other words $A$ is  called {\bf Szemer\'{e}di--Trotter set})
    with parameter $\a \ge 1$
    if for any set $B\subset \R$ and an arbitrary $\tau \ge 1$ one has
\begin{equation}\label{f:SzT-type}
    |\{ x\in A+B ~:~ (A * B)(x) \ge \tau \}| \ll c (A) |B|^\a \cdot \tau^{-3} \,,
\end{equation}
    where $c (A)>0$ is a constant depends on the set $A$ only.
    We define the quantity $c (A) |B|^\a$ as $c(A,B)$.
\label{def:SzT-type}
\end{definition}

From the definition
one can
see that if $A$ has SzT--type
then $(-A)$ has the same SzT--type
with the same parameters $\a$ and $c(A)$.

\begin{remark}
    We put parameter $\a\ge 1$ because otherwise there is no any  SzT--type set.
    Indeed, take $B=C-A$, where set $C$ will be chosen later.
    One has $(A*B) (x) \ge |A|$ for any $x\in C$.
    Then by (\ref{f:SzT-type}), we obtain
$$
    |A|^3 |C| \ll c(A) |A+C|^\a \le c(A) |A|^\a |C|^\a \,.
$$
    Taking $|C|$ sufficiently large and having the set $A$ is fixed, we see that $\a\ge 1$.
\end{remark}

{\bf Examples.} Let us give some examples of SzT--type sets with parameter $\a=2$. \\
%
%
$1)~$ If $A\subset \R$ is a convex set then $A$ has SzT--type with $c(A) = |A|$, see \cite{ss2}.\\
$2)~$ Let $f$ be a strictly convex/concave function. Then $f(A)$ has SzT--type with
$c(A) = q(A)$,
where
\begin{equation}\label{f:q(A)}
    q (A) := \min_{C}
        \frac{|A+C|^2}{|C|}
        \,,
\end{equation}
and $A$ has SzT--type with $c(A) = q(f(A))$,
see \cite{SS1}, \cite{Li2}.\\
$3)~$ Let $|AA| \le M|A|$. Then $A$ has SzT--type with $c(A) = M^2 |A|$.
This is a particular case of the family from (2).
Indeed, take $f(x) = \log x$, and apply (\ref{f:q(A)}) with
$C=\log (A \cap \R^+)$ or $C=\log |(A \cap \R^-)|$.\\
$4)~$ Let $A \subset \R^{+}$, and $a \in \R \setminus \{0 \}$.
Then $\log A$ has SzT--type with $c(A) = q'(A)$, where
$$
    q' (A) := \min_{C}
        \frac{|(A+a)C|^2}{|C|}
        \,,
$$
see \cite{J_RN}, \cite{B_RN_S}.

There are another families of SzT--type sets, for example see a family of complex sets in \cite{KR}.

\bigskip

Using definition \ref{def:SzT-type} and  easy calculations,
one can obtain
upper bounds for some simple characteristics of SzT--type sets see, e.g.
papers \cite{SS1}, \cite{ss2}, \cite{Li}, \cite{Li2}.
It is more convenient do not use parameter $\a$ in the statements.

\begin{lemma}
    Let $A$ be a SzT--type set.
    Then
$$
    \E_3 (A) \ll c(A,A) \log |A| \,, \quad \quad \E^3 (A) \ll \E^{2}_{3/2} (A) c^{} (A,A) \,,
$$
    and for any $B$ one has
$$
    \E(A,B) \ll (c(A,B) |A| |B|)^{1/2} \,.
$$
\label{l:SzT}
\end{lemma}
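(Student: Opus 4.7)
The plan is to convert each energy into a moment $\sum_x f(x)^k$ of a single nonnegative function and then apply the SzT distribution bound. Let $f(x) := (A \c A)(x)$; since $(A*(-A))(x) = (A \c A)(-x)$ and $-A$ inherits the SzT--type of $A$ with $c(-A) = c(A)$, the definition yields $|\{x : f(x) \ge \tau\}| \ll c(A,A)\tau^{-3}$ for $\tau \ge 1$. For the asymmetric statement I would use $g(s) := (A \c B)(s) = ((-A)*B)(s)$, giving $|\{g \ge \tau\}| \ll c(A,B)\tau^{-3}$, together with the trivial identity $\sum_s g(s) = |A||B|$. These distributional inputs drive everything that follows.

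The bound $\E_3(A) \ll c(A,A)\log|A|$ then comes out of a direct dyadic summation: on the level set $\{x : f(x) \sim 2^j\}$ the SzT estimate gives $N_j \ll c(A,A)\, 2^{-3j}$, so each of the $O(\log|A|)$ relevant levels contributes $2^{3j} N_j \ll c(A,A)$ to $\E_3(A) = \sum_x f(x)^3$, and summing yields the claim.

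For the remaining two inequalities a naive dyadic sum leaves an unwanted logarithm, so I would use a threshold decomposition at a parameter $\Delta > 0$, to be optimized at the end. For $\E(A)$: on $\{f \le \Delta\}$ apply the pointwise majoration $f^2 \le \Delta^{1/2} f^{3/2}$ to obtain $\sum_{f \le \Delta} f^2 \le \Delta^{1/2}\E_{3/2}(A)$; on $\{f > \Delta\}$ the layer--cake identity combined with the SzT tail gives $\sum_{f > \Delta} f^2 \ll c(A,A)/\Delta$. Balancing at $\Delta \asymp (c(A,A)/\E_{3/2}(A))^{2/3}$ produces $\E(A) \ll c(A,A)^{1/3}\E_{3/2}(A)^{2/3}$, which cubed is the claimed estimate. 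The inequality for $\E(A,B)$ is handled in exactly the same fashion with $f$ replaced by $g$: the low part is now controlled by the $\ell^1$ norm, $\sum_{g \le \Delta} g^2 \le \Delta \sum_s g(s) = \Delta|A||B|$, the high part is still $\ll c(A,B)/\Delta$ by layer--cake, and the optimum $\Delta \asymp (c(A,B)/(|A||B|))^{1/2}$ delivers the square--root bound.

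The only step that needs care is picking the right pointwise majoration on the low part (exponent $3/2$ for $\E(A)$, exponent $1$ for $\E(A,B)$); the cubic tail supplied by the SzT property is exactly strong enough to balance these two choices without a logarithmic loss, in contrast with the cruder dyadic argument available for $\E_3(A)$.
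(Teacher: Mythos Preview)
Your argument is correct; the paper itself does not supply a proof of this lemma, merely remarking that it follows from ``easy calculations'' and citing \cite{SS1}, \cite{ss2}, \cite{Li}, \cite{Li2}. The dyadic sum for $\E_3(A)$ and the threshold--and--optimize decompositions you use for $\E(A)$ and $\E(A,B)$ are exactly the standard calculations behind those references, so there is nothing further to compare.
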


We need in one more technical lemma.

\begin{lemma}
    Let $A$, $A_*$ be a SzT--type set with the same parameter $\a>1$.
    Then
$$
    \E^{2\a-1} (A_*,A) \ll_{(\a-1)^{-1}}
            \left( \sum_x (A_* \c A_*)^{1/2} (x) (A\c A) (x) \right)^{2\a-2}
        \cdot
            c^{1/3} (A) c^{\a/3} (A_*) |A|^{2/3} |A_*|^{\a^2/3} \,.
$$
\label{l:SzT_1}
\end{lemma}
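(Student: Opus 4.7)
The plan is to decompose $\E(A_*, A)$ dyadically along the level sets of $(A_* \c A_*)$, apply the SzT hypothesis to the auxiliary pair $(A, Q_\sigma)$ to estimate the contribution of each level, and then interpolate that estimate against the weighted sum $P := \sum_x (A_* \c A_*)^{1/2}(x)(A \c A)(x)$ appearing on the right. For $\sigma\in[1,|A_*|]$ dyadic, set $Q_\sigma := \{x : (A_* \c A_*)(x) \sim \sigma\}$ and $S_\sigma := \sum_{x \in Q_\sigma}(A \c A)(x)$. Up to logarithmic factors, $\E(A_*, A) \sim \sum_\sigma \sigma S_\sigma$ and $P \sim \sum_\sigma \sigma^{1/2} S_\sigma$, so trivially $S_\sigma \le P\sigma^{-1/2}$ for each $\sigma$.

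The first and main step is to prove
$$ S_\sigma \ll R\,\sigma^{-\alpha}, \qquad R := c(A)^{1/3} c(A_*)^{\alpha/3} |A|^{2/3} |A_*|^{\alpha^2/3}. $$
A direct change of variables gives $S_\sigma = \sum_z A(z)(A * Q_\sigma)(z)$, and Definition \ref{def:SzT-type} applied to $A$ with $B = Q_\sigma$ yields $|\{z : (A * Q_\sigma)(z) \ge t\}| \ll c(A)|Q_\sigma|^\alpha t^{-3}$. Writing $S_\sigma = \int_0^\infty |A \cap \{(A * Q_\sigma) \ge t\}|\, dt$ and splitting at $t_0 = (c(A)|Q_\sigma|^\alpha/|A|)^{1/3}$, the value that balances the trivial bound $|A|$ against the SzT level-set bound, gives $S_\sigma \ll c(A)^{1/3}|A|^{2/3}|Q_\sigma|^{\alpha/3}$. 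This is the same dyadic split behind the third inequality of Lemma~\ref{l:SzT}. Inserting the SzT estimate $|Q_\sigma| \ll c(A_*,A_*)\sigma^{-3} = c(A_*)|A_*|^\alpha\sigma^{-3}$ for the set $A_*$ (whose opposite $-A_*$ is again SzT with the same parameters) and raising to the $\alpha/3$-th power produces the claimed bound.

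It remains to combine $S_\sigma \le P\sigma^{-1/2}$ with $S_\sigma \ll R\sigma^{-\alpha}$. By the elementary interpolation $\min(U,V) \le U^\lambda V^{1-\lambda}$ with $\lambda = (2\alpha-2)/(2\alpha-1)$, which is precisely the exponent that kills the $\sigma$-dependence, one obtains
$$ \sigma S_\sigma \ll P^{(2\alpha-2)/(2\alpha-1)}\, R^{1/(2\alpha-1)} $$
uniformly in $\sigma$. Summing dyadically over $\sigma$ gives a geometric series on the decaying branch $\sigma > \sigma^* = (R/P)^{2/(2\alpha-1)}$ with ratio $2^{1-\alpha}$, whose total is the maximum times a constant of order $(1 - 2^{1-\alpha})^{-1} \sim (\alpha-1)^{-1}$; this accounts for the $\ll_{(\alpha-1)^{-1}}$ in the statement. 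Raising the resulting bound on $\E(A_*, A)$ to the $(2\alpha-1)$-st power is the lemma.

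The main obstacle is the first step: one has to route the SzT hypothesis for $A$ through the auxiliary set $B = Q_\sigma$ (not a set of direct interest), and then combine the outcome with the SzT level-set bound for $A_*$ so that the two constants $c(A)$ and $c(A_*)$ enter precisely with the asymmetric exponents $1/3$ and $\alpha/3$ required; any other split of the dyadic integral produces the wrong $\alpha$-dependence in $|A|$ or $|A_*|$.
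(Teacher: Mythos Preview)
Your argument is correct and follows essentially the same route as the paper: both proofs bound the small-$(A_*\c A_*)$ contribution by $\tau^{1/2}P$ (the paper's $\omega_1$ is your $P$), bound $\sum_{x\in S_\tau}(A\c A)(x)$ via the SzT hypothesis for $A$ with $B=S_\tau$ together with $|S_\tau|\ll c(A_*)|A_*|^\alpha\tau^{-3}$, and then optimize in $\tau$; the only cosmetic difference is that you dyadicize from the outset while the paper takes a single threshold and absorbs the tail into a geometric series (both yielding the $(\alpha-1)^{-1}$ dependence).
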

\begin{proof}
Put $c_* = c(A_*)$, $c=c(A)$, $a=|A|$, $a_* = |A_*|$.
Splitting the sum,
we get with some inaccuracy
\begin{equation}\label{tmp:20.10.2014_1}
    \E(A_*,A) \ll \tau^{1/2}  \left( \sum_x (A_* \c A_*)^{1/2} (x) (A\c A) (x) \right)
        +
            \tau \sum_x S_\tau (x) (A \c A) (x)
                =
                    \tau^{1/2} \omega_1 + \tau \omega_2 \,,
\end{equation}
where $S_{\tau} = \{ x ~:~ (A_* \c A_*) (x) \ge \tau \}$.
Because of $A_*$
is Szemer\'{e}di--Trotter set,
we have  $|S_{\tau}| \ll c_* (a_*)^\a \tau^{-3}$.
On the other hand,  $A$ is also Szemer\'{e}di--Trotter set, so
\begin{equation}\label{tmp:20.10.2014_2}
    \sum_x S_\tau (x) (A \c A) (x) = \sum_{x\in A} (S_\tau * A) (x)
        \ll
            c^{1/3} |S_\tau|^{\a/3} a^{2/3}
                \ll
                    c^{1/3} a^{2/3} c_*^{\a/3} (a_*)^{\a^2/3} \tau^{-\a} \,.
\end{equation}
Combining (\ref{tmp:20.10.2014_1}) and (\ref{tmp:20.10.2014_2}), we obtain
$$
    \E(A_*,A) \ll_{(\a-1)^{-1}} \tau^{1/2}  \omega_1
        +
            \tau^{1-\a} c^{1/3} a^{2/3} c_*^{\a/3} (a_*)^{\a^2/3} \,.
$$
An optimal choice of parameter $\tau$ is
$\tau^{1/2} = \omega^{-1/(2\a-1)}_1 c^{1/3(2\a-1)} c^{\a/3(2\a-1)}_* a^{2/3(2\a-1)} a^{\a^2/3(2\a-1)}_*$.
Thus
$$
    \E^{2\a-1} (A_*,A) \ll_{(\a-1)^{-1}}
            \left( \sum_x (A_* \c A_*)^{1/2} (x) (A\c A) (x) \right)^{2\a-2}
        \cdot
            c^{1/3}  c^{\a/3}_*  a^{2/3} a_*^{\a^2/3}
$$
as required.
$\hfill\Box$
\end{proof}

\section{The proof of the main result}
\label{sec:proof}

We begin with a lemma from \cite{s_mixed}.

\begin{lemma}
    Let $A\subseteq \Gr$ be a set and $g$ be a nonnegative function on $\Gr$.
    Suppose that $f_1$ is the main eigenfunction of $\oT^g_A$ or $\t{\oT}^g_A$,
    and $\mu_1$ is the correspondent eigenvalue.
    Then
$$
    \langle \oT^{A\c A}_A f_1 f_1 \rangle \ge \frac{\mu^3_1}{\| g\|_2^2 \cdot \| g\|_{\infty}} \,.
$$
\label{l:action_g}
\end{lemma}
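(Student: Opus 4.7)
The plan is to write $\mu_1$ as an $\ell^2$ pairing against the autocorrelation $h:=f_1 \c f_1$ and then to apply H\"older's inequality and Cauchy--Schwarz so that the factor $(A\c A)(z)$ emerges on the right. In the case of $\oT^g_A$ the eigenvalue equation gives
\[
\mu_1 \;=\; \langle \oT^g_A f_1, f_1 \rangle \;=\; \sum_z g(z)\, h(z),
\]
and because $g \ge 0$ the matrix $\oT^g_A$ has nonnegative entries, so by Perron--Frobenius one may take $f_1 \ge 0$ with $\|f_1\|_2 = 1$. In particular $h \ge 0$ and $h$ vanishes outside $A-A$, so that $h(z) > 0$ forces $(A\c A)(z) \ge 1$.

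The first step is H\"older's inequality with exponents $(3, 3/2)$:
\[
\mu_1 \;\le\; \|g\|_3 \cdot \Bigl(\sum_z h(z)^{3/2}\Bigr)^{2/3}.
\]
The decisive step is to bound $\sum_z h(z)^{3/2}$ by $\langle \oT^{A\c A}_A f_1, f_1\rangle^{1/2}$. Writing
\[
h(z)^{3/2} \;=\; \frac{h(z)}{(A\c A)(z)^{1/2}} \cdot h(z)^{1/2} (A\c A)(z)^{1/2}
\]
(with the convention $0/0 := 0$, harmless because of the support observation above) and applying Cauchy--Schwarz yields
\[
\sum_z h(z)^{3/2} \;\le\; \Bigl(\sum_z \frac{h(z)^2}{(A\c A)(z)}\Bigr)^{1/2} \cdot \Bigl(\sum_z h(z)\, (A\c A)(z)\Bigr)^{1/2}.
\]
The second factor is exactly $\langle \oT^{A\c A}_A f_1, f_1 \rangle^{1/2}$. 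For the first factor, a pointwise Cauchy--Schwarz inside the defining sum of $h$ gives
\[
h(z)^2 \;=\; \Bigl(\sum_y f_1(y) A(y) \cdot f_1(y+z) A(y+z)\Bigr)^2 \;\le\; (A \c A)(z) \cdot (f_1^2 \c f_1^2)(z),
\]
whence $\sum_z h(z)^2 / (A\c A)(z) \le \sum_z (f_1^2 \c f_1^2)(z) = \|f_1\|_2^4 = 1$.

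Combining the estimates, $\sum_z h(z)^{3/2} \le \langle \oT^{A\c A}_A f_1, f_1 \rangle^{1/2}$, so $\mu_1^3 \le \|g\|_3^3 \cdot \langle \oT^{A\c A}_A f_1, f_1 \rangle$, and the elementary bound $\|g\|_3^3 = \sum_z g(z)^3 \le \|g\|_\infty \|g\|_2^2$ completes the proof for $\oT^g_A$. The case $\t\oT^g_A$ is handled by the identical sequence of manipulations, with $f_1 \c f_1$ replaced by $f_1 * f_1$ in the expression for $\mu_1$ and the Cauchy--Schwarz step taking the form $(f_1 * f_1)(z)^2 \le (A*A)(z)(f_1^2 * f_1^2)(z)$. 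The only delicate point is the middle Cauchy--Schwarz split: one must position the weight $(A\c A)(z)^{\pm 1/2}$ so that one factor becomes the target $\langle \oT^{A\c A}_A f_1,f_1 \rangle^{1/2}$ while the other collapses, via the pointwise estimate on $h(z)$ above, to the normalization $\|f_1\|_2^4 = 1$.
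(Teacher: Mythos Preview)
The paper does not prove this lemma here; it is quoted from \cite{s_mixed}. Your argument for the operator $\oT^g_A$ is correct and self--contained: the H\"older step with exponents $(3,3/2)$, followed by the weighted Cauchy--Schwarz split with weight $(A\c A)^{\pm1/2}$ and the pointwise bound $(f_1\c f_1)^2 \le (A\c A)\,(f_1^2\c f_1^2)$, yields $\mu_1^3 \le \|g\|_3^3\,\langle \oT^{A\c A}_A f_1,f_1\rangle$, and $\|g\|_3^3 \le \|g\|_\infty \|g\|_2^2$ finishes.

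There is, however, a small gap in your treatment of $\t\oT^g_A$. With $h=f_1*f_1$ and weight $(A*A)^{\pm1/2}$, the second Cauchy--Schwarz factor becomes $\bigl(\sum_z (f_1*f_1)(z)(A*A)(z)\bigr)^{1/2}$, which is \emph{not} equal to the target $\langle \oT^{A\c A}_A f_1,f_1\rangle^{1/2}=\bigl(\sum_z (f_1\c f_1)(z)(A\c A)(z)\bigr)^{1/2}$; e.g.\ for $A=\{0,1\}$ and $f_1=\delta_0+2\delta_1$ these sums are $13$ and $14$. Fortunately the inequality points the right way: rewriting the constraint $a+b=c+d$ as $a-d=c-b$ gives
\[
\sum_z (f_1*f_1)(z)(A*A)(z)=\sum_w (A\c f_1)(w)\,(A\c f_1)(-w)\le \sum_w (A\c f_1)(w)^2=\langle \oT^{A\c A}_A f_1,f_1\rangle,
\]
the middle step being Cauchy--Schwarz on the pairs $\pm w$. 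With this one extra line inserted, your proof is complete for both operators.
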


A particular case $\a=2$ of
the next lemma is contained inside Theorem 8 of paper \cite{Sh_ineq}.
We give the proof for the sake of completeness.

\begin{lemma}
    Let $A$ be a SzT--type set
    and let $\D \ge 1$ be a real number.
    Suppose that
    $$B \subseteq \{ x ~:~ (A\c A) (x) \ge \Delta \}
        \quad
    \mbox{ or }
        \quad
    B \subseteq \{ x ~:~ (A* A) (x) \ge \Delta \} \,.$$
    Then
\begin{equation}\label{f:E_3(A,A,B)}
    \E_3(A,A,B)
        \ll
            \D^{-\frac{4}{3\a-1}} c(A)^{\frac{5\a+1}{2(3\a-1)}} |A|^{\frac{2\a^2+5\a-1}{2(3\a-1)}} |B|^{\frac{3(\a^2-1)}{2(3\a-1)}} \log |A| \,.
\end{equation}
\label{l:E_3(A,A,B)}
\end{lemma}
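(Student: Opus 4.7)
The plan is to dyadically decompose $\E_3(A,A,B) = \sum_t (A\c A)^2(t)(B\c B)(t)$ according to the level sets of $A\c A$, and to control the resulting sum via two SzT applications in the spirit of the proof of Lemma~\ref{l:SzT_1}. Setting $P_j := \{t : 2^j \le (A\c A)(t) < 2^{j+1}\}$ and $\mu_j := \sum_{t\in P_j}(B\c B)(t)$, one has $\E_3(A,A,B) \ll \sum_j 2^{2j}\mu_j$, and the first SzT application (Definition~\ref{def:SzT-type}) gives the level-set bound $|P_j| \ll c(A)|A|^\a 2^{-3j}$.

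Three elementary estimates on $\mu_j$ will be needed: the trivial bound $\mu_j \le |B|^2$; the bound $\mu_j \le |B|\cdot|P_j| \ll c(A)|B||A|^\a 2^{-3j}$; and, using $(A\c A)(t)\ge 2^j$ on $P_j$ combined with Lemma~\ref{l:SzT}, the consequence $2^j\mu_j \le \E(A,B) \ll (c(A)|A||B|^{\a+1})^{1/2}$. The core of the argument is then a sharper bound on $\mu_j$ obtained via a second SzT application: rewriting $\mu_j = \sum_b B(b)(P_j\c B)(b)$, performing a further dyadic decomposition on the values of $P_j\c B$, and invoking SzT for $A$ against the auxiliary set $B$ (whose cardinality satisfies $|B|\D^3 \ll c(A)|A|^\a$ by the hypothesis) yields an estimate of the form $\mu_j \ll 2^{-\beta j}c(A)^{e_c}|A|^{e_A}|B|^{e_B}$ with exponents depending on $\a$.

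To conclude, one introduces a splitting parameter $\tau$: in the low range $2^j<\tau$ one uses the inequality $(A\c A)^2(t)\le \tau^{3/2}(A\c A)^{1/2}(t)$ to reduce to the auxiliary sum $\omega_1 := \sum_t (A\c A)^{1/2}(t)(B\c B)(t)$, itself controlled by $\omega_1 \le |B|\E(A,B)^{1/2}$ through Cauchy--Schwarz; in the high range $2^j\ge\tau$ one applies the refined bound on $\mu_j$. Optimising $\tau$ as a power of $\D$---this is where the hypothesis $B\subseteq\{x : (A\c A)(x)\ge\D\}$ enters decisively---produces the denominator $2(3\a-1)$ in every exponent of the claim, and the $\log|A|$ factor is the standard price of summing over dyadic scales. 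The main obstacle is precisely the refined bound on $\mu_j$ and the verification that, after all interpolations, the optimal $\tau$ yields exactly the stated exponents; the specialisation $\a=2$ should recover Theorem~8 of \cite{Sh_ineq} and provides a helpful sanity check.
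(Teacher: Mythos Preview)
Your overall scheme---dyadic decomposition of $\E_3(A,A,B)$ along the level sets $P_j$ of $A\c A$, the SzT bound $|P_j|\ll c(A)|A|^\a 2^{-3j}$, and a low/high split in a parameter---matches the paper's architecture. But the decisive step is missing. In your ``core'' paragraph you propose to bound $\mu_j=\sum_{t\in P_j}(B\c B)(t)$ by ``invoking SzT for $A$ against the auxiliary set $B$''; however $\mu_j$ contains no copy of $A$, and neither $P_j$ nor $B$ is an SzT set, so there is nothing to apply SzT to. The only use you make of the hypothesis is the size bound $|B|\Delta^3\ll c(A)|A|^\a$, and you say the hypothesis ``enters decisively'' only when optimising $\tau$. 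That cannot be right: the claimed estimate carries an explicit factor $\Delta^{-4/(3\a-1)}$ \emph{in addition to} the $|B|$ power, and no amount of optimisation over $\tau$ will manufacture a free-standing power of $\Delta$ if the hypothesis has been used only to bound $|B|$.

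The paper exploits the hypothesis pointwise, not through $|B|$. Writing $Q$ for the dominant dyadic level set (with $(A\c A)\sim q$ on $Q$), one has the crude bound $\sigma\ll Lq^2\sum_{x\in Q}(B\c B)(x)$, and then uses $B(y)\le\Delta^{-1}(A\c A)(y)$ inside the convolution to obtain
\[
\sum_{x\in Q}(B\c B)(x)\;\le\;\Delta^{-1}\sum_x (Q\c B)(x)(A\c A)(x)\;\le\;\Delta^{-1}\,\E^{1/2}(Q,A)\,\E^{1/2}(B,A)\,,
\]
the last step being Cauchy--Schwarz. Now $A$ has reappeared, and Lemma~\ref{l:SzT} gives $\E(Q,A)\ll(c(A)|A||Q|^{\a+1})^{1/2}$ together with $|Q|\ll c(A)|A|^\a q^{-3}$. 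This is paired with the simpler bound $\sigma\ll Lq\,\E(A,B)$ (just $(A\c A)\le q$ on $Q$), and balancing the two in $q$ yields the stated exponents; your $\tau^{3/2}\omega_1$ low-range bound (borrowed from the template of Lemma~\ref{l:SzT_1}) is not needed and does not lead to the same optimisation.
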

\begin{proof}
    Put $a=|A|$, $L = \log a$ and $c=c(A)$.
    By the pigeonhole principle there is a set $Q$ such that
$$
    \sigma := \E_3(A,A,B)
        \ll
            L \sum_{x\in Q} (A\c A)^2 (x) (B\c B) (x) \,,
$$
    and the values of the convolution $(A\c A) (x)$ differ at most twice on $Q$.
    Denote  by $q$ the maximum of $(A\c A) (x)$ on $Q$.
    Because of $A$ is SzT--type set, we have   $|Q| \ll c a^\a q^{-3}$.
    Using Lemma \ref{l:SzT}, we obtain
\begin{equation}\label{tmp:14.10.2014_1}
    \sigma \ll L q \E (A,B) \ll L q (c(A) a |B|^{\a+1})^{1/2} \,.
\end{equation}
    On the other hand, by the definition of the set $B$
    and the Cauchy--Schwarz inequality one has
\begin{equation}\label{tmp:14.10.2014_2}
    \sigma \ll L q^2 \D^{-1} \E(Q,B,A,A)
        \ll  L q^2 \D^{-1} \E^{1/2} (Q,A) \E^{1/2} (B,A) \,.
\end{equation}
    Combining the last two bounds and the upper bound for size of $Q$, we have
$$
    \sigma
        \ll
            L q \E^{1/2} (A,B) (\E^{1/2} (A,B) + q \D^{-1} c^{1/4} a^{1/4} |Q|^{(\a+1)/4})
                \ll
$$
$$
                \ll
                     L q \E^{1/2} (A,B) (\E^{1/2} (A,B) + q^{-(3\a-1)/4} \D^{-1} c^{(\a+2)/4} a^{(\a^2+\a+1)/4} ) \,.
$$
The optimal choice of $q$ is $q=\E^{-2/(3\a-1)} (A,B) \D^{-4/(3\a-1)} c^{(\a+2)/(3\a-1)} a^{(\a^2+\a+1)/(3\a-1)}$.
Here we have used the fact that $\a > 1/3$.
Substituting $q$ into the last formula and using Lemma \ref{l:SzT} again, we get
$$
    \sigma
        \ll
            L q \E (A,B)
                \ll
                    L \E^{3(\a-1)/(3\a-1)} (A,B) \cdot \D^{-4/(3\a-1)} c^{(\a+2)/(3\a-1)} a^{(\a^2+\a+1)/(3\a-1)}
                        =
$$
$$
    =
         L \D^{-\frac{4}{3\a-1}} |B|^{\frac{3(\a^2-1)}{6\a-2}} c^{\frac{5\a+1}{6\a-2}} a^{\frac{2\a^2+5\a-1}{6\a-2}}
$$
as required.
$\hfill\Box$
\end{proof}

\bigskip

Let us formulate the main result of the paper.

\begin{theorem}
    Suppose that $A\subset \R$ has SzT--type with parameter $\a$.
    Then
\begin{equation}\label{f:p_main}
    |A+A| \gg  c(A)^{\frac{1-11\a}{3\a^2+12\a+1}} |A|^{\frac{-8\a^2+57\a-3}{3\a^2+12\a+1}}
        \cdot
            (\log |A|)^{-\frac{4(3\a-1)}{3\a^2+12\a+1}} \,.
\end{equation}
    In particular, for $\a=2$ one has
\begin{equation}\label{f:p_main_1}
    |A+A| \gg  c(A)^{-\frac{21}{37}} |A|^{\frac{79}{37}}
        \cdot
            (\log |A|)^{-\frac{20}{37}} \,.
\end{equation}
\label{t:main}
\end{theorem}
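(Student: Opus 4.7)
The plan is to combine the eigenvalue method of \cite{s_mixed} (as packaged in Lemma \ref{l:action_g}) with the third--moment estimate of Lemma \ref{l:E_3(A,A,B)} on Szemer\'{e}di--Trotter level sets. First, I would dyadically pigeonhole on $A\circ A$ to isolate a level set $P = P_\Delta := \{z : (A \circ A)(z) \sim \Delta\}$ carrying mass $\Delta |P| \gg |A|^2/\log |A|$, subject to the SzT bound $|P| \ll c(A)|A|^\alpha/\Delta^{3}$ from Definition~\ref{def:SzT-type}. Consider the hermitian operator $\oT^{P}_A$ (note $P=-P$ as $A\circ A$ is symmetric), let $\mu_1$ be its main eigenvalue and $f_1$ the normalized main eigenfunction. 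The Rayleigh quotient with trial vector $A/\sqrt{|A|}$ gives $\mu_1 \gg \Delta |P|/|A|$, and Lemma \ref{l:action_g} with $g = P$ (so $\|P\|_2^2 = |P|$, $\|P\|_\infty = 1$) yields the lower bound
$$
\langle \oT^{A\circ A}_A f_1, f_1\rangle \gg \mu_1^3/|P|.
$$

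To convert this into a bound on $|A+A|$, I would identify $\langle \oT^{A\circ A}_A f_1, f_1\rangle$ with $\sum_z (A \circ A)(z)(f_1 \circ f_1)(z) = \|A * f_1\|_2^2$. The trivial inequality $\|A*f_1\|_2^2 \leq |A+A|\cdot|A|$ (using $\|A*f_1\|_\infty \leq |A|^{1/2}$ via Cauchy--Schwarz and $\mathrm{supp}(A*f_1)\subseteq A+A$) leads only to the weak conclusion $|A+A|\gg c(A)^{1/2}|A|^{(\alpha-2)/2}/\log^{3/2}|A|$. The essential refinement is to dyadically decompose $(A\circ A)$ on the right--hand side and apply Cauchy--Schwarz level--by--level, bounding the contribution from each level $D_j$ in terms of $\E_3(A,A,D_j)^{1/2}\E(f_1)^{1/2}$ via Lemma \ref{l:E_3(A,A,B)}. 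This imports the non--trivial Szemer\'{e}di--Trotter exponents of that lemma into the final estimate, producing a much sharper upper bound on $\|A*f_1\|_2^2$ involving fractional powers of $c(A)$, $|A|$, $|D_j|$ and $\Delta$.

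Combining the two bounds on $\|A*f_1\|_2^2$, eliminating the inner dyadic scales via $\Delta^2 \ll c(A)|A|^{\alpha-2}\log|A|$, and optimizing over $\Delta$ yields the stated inequality; since $-A$ is SzT with the same parameters as $A$ (observed right after Definition~\ref{def:SzT-type}), the resulting difference--set estimate transfers to $|A+A|$. The main obstacle is the intricate bookkeeping: the cubic $\mu_1^3$ from Lemma \ref{l:action_g} interacts with the three fractional exponents $\tfrac{5\alpha+1}{2(3\alpha-1)}$, $\tfrac{2\alpha^2+5\alpha-1}{2(3\alpha-1)}$, $\tfrac{3(\alpha^2-1)}{2(3\alpha-1)}$ of Lemma \ref{l:E_3(A,A,B)} and with the SzT dyadic constraint to produce precisely the quadratic denominator $3\alpha^2+12\alpha+1$, the numerator $-8\alpha^2+57\alpha-3$, and the log exponent $4(3\alpha-1)$ in \eqref{f:p_main}; achieving the exact balance requires careful tracking of every dyadic and H\"older loss.
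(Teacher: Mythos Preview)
Your outline has a structural gap: the quantity $|A+A|$ never enters your inequality chain with enough weight to produce the stated exponents.

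You pigeonhole on $A\circ A$ and work with $\oT^{P}_A$ for a level set $P\subseteq A-A$. In the paper the pigeonholing is on $A*A$: one takes $S_1=\{z:(A*A)(z)\ge |A|^2/(2|A+A|)\}\subseteq A+A$ and works with the \emph{tilde} operator $\tilde\oT^{S_1}_A$. Then $|A+A|$ appears in three places simultaneously: through $\|S_1\|_2^2\le |A+A|$ in Lemma~\ref{l:action_g}, and through both the threshold $\Delta=|A|^2/(2|A+A|)$ and the size $|S_1|\le |A+A|$ when Lemma~\ref{l:E_3(A,A,B)} is applied to $B=S_1$. None of these are available in your setup, where $P$ and its threshold are governed by the SzT constraint on $A\circ A$, not by $|A+A|$.

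Your proposed remedy, bounding $\sum_{t\in D_j}(A\circ A)(t)(f_1\circ f_1)(t)$ by $\E_3(A,A,D_j)^{1/2}\E(f_1)^{1/2}$, is not a valid Cauchy--Schwarz step: splitting the $t$--sum gives $\bigl(\sum_{t\in D_j}(A\circ A)^2(t)\bigr)^{1/2}\E(f_1)^{1/2}$, and the first factor is $\approx 2^{2j}|D_j|$, not $\E_3(A,A,D_j)=\sum_t(A\circ A)^2(t)(D_j\circ D_j)(t)$. The convolution $D_j\circ D_j$ simply does not arise from a single sum over $t\in D_j$, so Lemma~\ref{l:E_3(A,A,B)} cannot be invoked here. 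And the closing remark that a difference--set bound ``transfers to $|A+A|$'' via $-A$ being SzT is false: replacing $A$ by $-A$ yields $|(-A)-(-A)|=|A-A|$ again.

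What the paper does instead, after Lemma~\ref{l:action_g}, is \emph{not} to upper--bound $\langle\oT^{A\circ A}_A f_1,f_1\rangle$ directly. It forms
\[
\sigma=\sum_{x,y,z}\oT^{A\circ A}_A(x,y)\,\tilde\oT^{S_1}_A(x,z)\,\tilde\oT^{S_1}_A(y,z)=\sum_j\mu_j^2\langle\oT^{A\circ A}_A f_j,f_j\rangle\ge\mu_1^2\cdot\frac{\mu_1^3}{|S_1|}\,,
\]
rewrites $\sigma=\sum_{\a,\beta}S_1(\a)S_1(\beta)(A\circ A)(\a-\beta)\,\Cf_3(-A,A,A)(\a,\beta)$, and applies Cauchy--Schwarz in the \emph{pair} $(\a,\beta)$ to get $\sigma^2\le\E_3(A)\,\E_3(A,A,S_1)$. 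Now $\E_3(A,A,S_1)$ genuinely appears, because summing $S_1(\a)S_1(\beta)$ over $\a-\beta$ fixed produces $(S_1\circ S_1)$. This two--variable Cauchy--Schwarz on a quantity that already contains $S_1$ is the missing idea in your plan.
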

\begin{proof}
Let $S=A+A$, $|S| = d$, $a=|A|$.
Let also $L=\log a$, $c=c(A)$.
We have
\begin{equation}\label{tmp:14.10.2014_3}
    |A|^2 = \sum_{x,y} A(x) A(y) S(x+y) \le 2\sum_{z\in S_1} (A*A) (z) \,,
\end{equation}
where $S_1 = \{ z\in S ~:~ (A*A) (z) \ge 2^{-1} a^2 d^{-1} \}$.
Denote by $f_j$, $\mu_j$ the eigenfunctions and eigenvalues of hermitian operator
$\t{\oT}^{S_1}_A$.
From (\ref{tmp:14.10.2014_3}) it follows that
$\mu_1 \ge a/2$.
Applying Lemma \ref{l:action_g}, we see that
\begin{equation}\label{tmp:14.10.2014_4}
    \langle \oT^{A\c A}_A f_1, f_1 \rangle \ge \mu^3_1 (\t{\oT}^{S_1}_A) d^{-1} \ge 2^{-3} a^3 d^{-1} \,.
\end{equation}
Further, by nonnegativity of the operator $\oT^{A\c A}_A$ as well as
inequality (\ref{tmp:14.10.2014_4})
and the lower bound for $\mu_1$, we get
\begin{equation}\label{tmp:14.10.2014_5}
    \sigma:= \sum_{x,y,z} \oT^{A\c A}_A (x,y) \t{\oT}^{S_1}_A (x,z) \t{\oT}^{S_1}_A (y,z)
        =
            \sum_{j=1}^{a} \mu^2_j \langle \oT^{A\c A}_A f_j, f_j \rangle
                \ge
                    \mu^2_1
                        \langle \oT^{A\c A}_A f_1, f_1 \rangle
                            \ge
                                2^{-5} a^5 d^{-1} \,.
\end{equation}
On the other hand
\begin{equation}\label{tmp:14.10.2014_6}
    \sigma
        =
            \sum_{x,y,z \in A} (A\c A)(x-y) S_1 (x+z) S_1 (y+z)
                =
                    \sum_{\a,\beta} S_1 (\a) S_1 (\beta) (A\c A) (\a-\beta) \Cf_3 (-A,A,A) (\a,\beta) \,.
\end{equation}
Combining (\ref{tmp:14.10.2014_5}), (\ref{tmp:14.10.2014_6}) and using (\ref{f:E_k_via_C}), we obtain
by the Cauchy--Schwarz inequality that
\begin{equation}\label{tmp:14.10.2014_7}
    a^{10} d^{-2}
        \ll
            \E_3 (A) \E_3 (A,A,S_1) \,.
\end{equation}
Applying the first formula of Lemma \ref{l:SzT} to estimate the quantity  $\E_3 (A)$ and Lemma \ref{l:E_3(A,A,B)}
to estimate $\E_3 (A,A,S_1)$, we have
$$
    a^{10} d^{-2}
        \ll
            L^2 a^\a c  \cdot
                \D^{-\frac{4}{3\a-1}} c^{\frac{5\a+1}{2(3\a-1)}} a^{\frac{2\a^2+5\a-1}{2(3\a-1)}} d^{\frac{3(\a^2-1)}{2(3\a-1)}} \,,
$$
where $\D = 2^{-1} a^2 d^{-1}$.
After some calculations, we get
$$
    d \gg L^{-\frac{4(3\a-1)}{3\a^2+12\a+1}} c^{\frac{1-11\a}{3\a^2+12\a+1}} a^{\frac{-8\a^2+57\a-3}{3\a^2+12\a+1}}
$$
as required.
$\hfill\Box$
\end{proof}

\bigskip

{\bf Proof of Theorems \ref{t:convex}, \ref{t:LR-N'}.}
To obtain Theorem \ref{t:convex} just recall that $\a=2$, $c(A) = |A|$ for convex sets.
Remembering the definition of $q(f(A))$ from (\ref{f:q(A)}), we have
$c(A) = q(f(A)) \le |f(A)+C|^2 |C|^{-1}$.
After that applying the main Theorem \ref{t:main}, we get Theorem \ref{t:LR-N'}.
$\hfill\Box$

\begin{remark}
    Of course, one can replace the condition $|A| = |C|$ in Theorems \ref{t:LR-N}, \ref{t:LR-N'}
    to $c_1 |A| \le |C| \le c_2 |A|$, where $c_1, c_2>0$ are any absolute constants.
    Certainly,
    signs
    $\ll,\gg$ should be
    changed
    by
    $\ll_{c_1, c_2}, \gg_{c_1, c_2}$ in the case.
    Even more, it is possible to prove the results for sets $A$ and $C$,  having incomparable sizes.
    We do not make such calculations here (and also below), note only that because in Theorem \ref{t:LR-N'}, we have
    $c(A) \le |f(A)+C|^2 |C|^{-1}$ this  implies
$$
    |f(A)+C|^{42} |A+A|^{37} \gg |A|^{79} |C|^{21} \log^{-20} |A| \,.
$$
\end{remark}

\bigskip

We conclude the section proving a result which generalize, in particular, Theorem 1.3 from \cite{Li2} as well as
the results on sumsets/difference sets of convex sets from \cite{ss2}.
The arguments are in the spirit of Theorem \ref{t:main}.
We need in a lemma from \cite{s_mixed}.

\begin{lemma}
    Let $A,B\subseteq \Gr$ be finite sets, $|B| \le |A|$,
    $D,S\subseteq \Gr$ be two sets such that
    $A-B \subseteq D$, $A+B \subseteq S$.
    Then the main eigenvalues and singularfunctions
    of the operators $\oT^{D}_{A,B}$, $\t{\oT}^{S}_{A,B}$
    equal
    $\la_1 = (|A||B|)^{1/2}$,
    and
    $$
        v_1 (y) = B(y)/|B|^{1/2}\,,
            \quad \mbox{ and } \quad
        u_1 (x) = A(x)/|A|^{1/2}
        \,.
    $$
    All other singular values equal zero.
\label{l:eigenvalues_D,S'}
\end{lemma}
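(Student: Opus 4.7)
The plan is to observe that the hypotheses $A - B \subseteq D$ and $A + B \subseteq S$ collapse the two operators to rank-one operators. Indeed, if $A(x) B(y) \ne 0$ then $x \in A$ and $y \in B$, so $x - y \in A - B \subseteq D$ and $x + y \in A + B \subseteq S$, forcing $D(x-y) = S(x+y) = 1$. Thus for \emph{every} pair $(x,y)$ one has
$$
    \oT^{D}_{A,B}(x,y) \;=\; A(x)\,B(y) \;=\; \t{\oT}^{S}_{A,B}(x,y) \,,
$$
so both operators coincide with the outer product $A \otimes B$, regarded as a matrix indexed by the ambient group (or, equivalently, restricted to supports $A \times B$).

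First I would write down the singular value decomposition of this outer product directly. Normalising, set $u(x) = A(x)/|A|^{1/2}$ and $v(y) = B(y)/|B|^{1/2}$; these are unit vectors in $\ell^2$, and
$$
    A(x)\,B(y) \;=\; (|A|\,|B|)^{1/2}\, u(x)\, v(y) \,.
$$
This is already a valid SVD with a single nonzero term, so $\la_1 = (|A|\,|B|)^{1/2}$ with the asserted left/right singular functions $u_1 = u$, $v_1 = v$.

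Next I would verify uniqueness and the vanishing of all other singular values by a rank argument: the matrix $A(x)B(y)$ has rank one (every row is a scalar multiple of the vector $y \mapsto B(y)$), so $\t{\oT}^S_{A,B} (\t{\oT}^S_{A,B})^{*}$ and $(\t{\oT}^S_{A,B})^{*} \t{\oT}^S_{A,B}$ each have a single nonzero eigenvalue, namely $\la_1^{2} = |A|\,|B|$, corresponding to the unit eigenvectors $u$ and $v$. The same holds verbatim for $\oT^{D}_{A,B}$. Consequently $\la_j = 0$ for $j \ge 2$.

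There is essentially no obstacle: the entire content is the observation that the cut-off weights $D(x-y)$ and $S(x+y)$ are identically $1$ on $A \times B$, which removes the ``mixing'' and reduces each operator to an outer product. The remainder is linear algebra of rank-one matrices.
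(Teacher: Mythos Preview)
Your argument is correct: the hypotheses force $D(x-y)=S(x+y)=1$ on the support $A\times B$, so both matrices collapse to the rank-one outer product $A(x)B(y)$, and the SVD is immediate. The paper does not actually prove this lemma---it is quoted from \cite{s_mixed}---but your computation is exactly the natural one and would be the expected proof.
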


Using lemma above,
we prove our second main result,
although one can use a more elementary approach as in \cite{ss2}.

\begin{theorem}
    Suppose that $A,A_*\subset \R$ have SzT--type with the same parameter $\a$.
    Then
$$
    |A \pm A_*| \gg
$$
$$
        \min\{
        c (A_*)^{-\frac{2}{3(7+\a)}} c(A)^{-\frac{13}{3(7+\a)}} |A_*|^{\frac{2(24-\a)}{3(7+\a)}} |A|^{\frac{33-10\a}{3(7+\a)}},\,
        c (A)^{-\frac{2}{3(7+\a)}} c(A_*)^{-\frac{13}{3(7+\a)}} |A|^{\frac{2(24-\a)}{3(7+\a)}} |A_*|^{\frac{33-10\a}{3(7+\a)}}
            \}
$$
\begin{equation}\label{f:p_main_diff}
            \times
            (\log (|A| |A_*|))^{-\frac{2}{7+\a}}
            \,,
\end{equation}
and for $\a>1$
$$
    |A \pm A_*| \gg_{(\a-1)^{-1}} \,
        c (A)^{-\frac{4\a-2}{3(\a^2+4\a-3)}} c(A_*)^{-\frac{7\a-5}{3(\a^2+4\a-3)}} |A|^{\frac{28\a-4\a^2-16}{3(\a^2+4\a-3)}} |A_*|^{\frac{35\a-4\a^2-21}{3(\a^2+4\a-3)}}
$$
\begin{equation}\label{f:p_main_diff_new}
        \times
            (\log (|A| |A_*|))^{-\frac{2(\a-1)}{\a^2+4\a-3}} \,.
\end{equation}
    In particular, for $\a=2$ one has
$$
    |A \pm A_*| \gg
        \max\{
        c (A_*)^{-\frac{1}{3}} c(A)^{-\frac{2}{9}} |A_*|^{\frac{11}{9}} |A|^{\frac{8}{9}},
        c (A)^{-\frac{1}{3}} c(A_*)^{-\frac{2}{9}} |A|^{\frac{11}{9}} |A_*|^{\frac{8}{9}} \,,
$$
$$
        \min\{
        c (A_*)^{-\frac{2}{27}} c(A)^{-\frac{13}{27}} |A_*|^{\frac{44}{27}} |A|^{\frac{13}{27}},
        c (A)^{-\frac{2}{27}} c(A_*)^{-\frac{13}{27}} |A|^{\frac{44}{27}} |A_*|^{\frac{13}{27}} \} \}
            \times
$$
\begin{equation}\label{f:p_main_diff_1}
    \times
        (\log (|A| |A_*|))^{-\frac{2}{9}} \,.
\end{equation}
    Finally
\begin{equation}\label{f:p_main_diff_2}
    |A\pm A_*|^{\frac{\a+1}{2}} |A-A|
        \gg
            |A|^{\frac{33-4\a}{6}} |A_*|^{\frac{6-\a}{3}} c^{-7/6} (A) c^{-1/3} (A_*) \log^{-1} (|A| |A_*|) \,.
\end{equation}
\label{t:main_diff}
\end{theorem}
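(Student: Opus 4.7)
The strategy is to transplant the eigenvalue-method of Theorem~\ref{t:main} into the asymmetric setting, using the singular value decomposition of the rectangular operators $\tilde{\oT}^{g}_{A,A_*}$ and $\oT^{g}_{A,A_*}$. Write $d = |A \pm A_*|$, $a = |A|$, $a_* = |A_*|$. A dyadic pigeonholing on $a a_* = \sum_z (A*A_*)(z)$ in the sumset case, respectively on $aa_* = \sum_z (A\c A_*)(z)$ in the difference case, produces a popular subset $S_1 \subseteq A \pm A_*$ with $|S_1| \leq d$ on which the relevant convolution is at least $\Delta := aa_*/(2d)$ and whose total mass is at least $aa_*/2$. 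Testing the bilinear form of $T = \tilde{\oT}^{S_1}_{A,A_*}$ (or $\oT^{S_1}_{A,A_*}$ for the difference) against the pair $(A/\sqrt{a},\, A_*/\sqrt{a_*})$ yields $\lambda_1(T) \geq \sqrt{aa_*}/2$, in the spirit of Lemma~\ref{l:eigenvalues_D,S'}.

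Next form the trace $\sigma = \mathrm{Tr}(TT^* \cdot \oT^{A\c A}_A) = \sum_{j} \lambda_j^2 \langle \oT^{A\c A}_A u_j, u_j\rangle$. The rectangular analog of Lemma~\ref{l:action_g} available through the theory of~\cite{s_mixed} gives $\langle \oT^{A\c A}_A u_1, u_1\rangle \gg \lambda_1^3/|S_1|$, whence $\sigma \gg (aa_*)^{5/2}/d$. A direct combinatorial identification produces
$$
\sigma = \sum_{\alpha, \beta} S_1(\alpha) S_1(\beta) (A \c A)(\alpha - \beta)\, \Cf_3(\tilde{A_*}, A, A)(\alpha, \beta),
$$
so Cauchy--Schwarz together with identity~(\ref{f:E_k_via_C}) produces $\sigma^2 \leq \E_3(A, A, S_1) \cdot \E_3(A, A, A_*)$. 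The symmetric construction using $T^* T$ and $\oT^{A_*\c A_*}_{A_*}$ gives the analogous inequality with $A$ and $A_*$ swapped; the two branches of the $\min$ in~(\ref{f:p_main_diff}) arise from these two symmetric choices.

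To close, estimate both mixed energies. H\"older gives $\E_3(A, A, A_*) \leq \E_3(A)^{2/3}\,\E_3(A_*)^{1/3}$, and Lemma~\ref{l:SzT} then controls both factors. For $\E_3(A, A, S_1)$ one cannot invoke Lemma~\ref{l:E_3(A,A,B)} directly, since $S_1$ is popular for $A * A_*$ rather than for $A \c A$; however, the SzT property of $A$ applied with the test set $A_*$ yields $|S_1| \ll c(A) a_*^\alpha \Delta^{-3}$, and an asymmetric adaptation of the level-set splitting in the proof of Lemma~\ref{l:E_3(A,A,B)} produces the required bound, delivering~(\ref{f:p_main_diff}). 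The sharper bound~(\ref{f:p_main_diff_new}) for $\alpha > 1$ follows by replacing Lemma~\ref{l:SzT} with Lemma~\ref{l:SzT_1} at the mixed-energy step. The intermediate bound~(\ref{f:p_main_diff_2}) is obtained by a cruder bookkeeping: bounding $\E_3(A, A, S_1) \leq a^2 \sum_{z \in A-A}(S_1 \c S_1)(z) \leq a^2 |S_1|\, |A-A|$ injects the factor $|A-A|$ directly, at the cost of a weaker $d$-exponent. The principal technical obstacle throughout is the rectangular analog of Lemma~\ref{l:action_g}: the hermitian proof must be adapted to operate on both $TT^*$ and $T^*T$, exploiting their shared non-zero spectrum and the identities $T v_1 = \lambda_1 u_1$ and $T^* u_1 = \lambda_1 v_1$.
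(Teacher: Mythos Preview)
Your plan transplants the popular-set/spectral argument of Theorem~\ref{t:main} wholesale, but the paper's proof of Theorem~\ref{t:main_diff} takes a markedly simpler route that sidesteps both obstacles you flag. The key difference is that the paper works with the \emph{full} set $S=A+A_*$, not a popular subset $S_1$. Because $A+A_*\subseteq S$, Lemma~\ref{l:eigenvalues_D,S'} says the operator $\tilde{\oT}^{S}_{A,A_*}$ has rank one: $\mu_1=aa_*$, $f_1=A/\sqrt{a}$, and all other singular values vanish. Hence the spectral sum $\sum_j \mu_j\langle \oT^{g}_A f_j,f_j\rangle$ collapses to a single exact term, and no rectangular analogue of Lemma~\ref{l:action_g} is needed at all. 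Second, the paper uses the \emph{half-power} weight $g=(A\circ A)^{1/2}$ rather than $A\circ A$: this makes the single surviving term equal $a_*\,\E_{3/2}(A)$, bounded below via $\E(A)\ge a^2 a_*^2/d$ together with the inequality $\E^3(A)\ll \E_{3/2}^2(A)\,c(A,A)$ from Lemma~\ref{l:SzT}. After Cauchy--Schwarz the square-root weight produces $\sigma^2\le \E_3(A_*,A,A)\cdot \E(A,S)$, i.e.\ a three-energy times a \emph{two}-energy; the latter is bounded directly by Lemma~\ref{l:SzT}, so no version of Lemma~\ref{l:E_3(A,A,B)} enters. Bound~(\ref{f:p_main_diff_new}) is obtained by swapping $(A\circ A)^{1/2}$ for $(A_*\circ A_*)^{1/2}$ and invoking Lemma~\ref{l:SzT_1} for the lower bound, and~(\ref{f:p_main_diff_2}) comes from $\E_{3/2}^2(A)\ge a^6/|A-A|$, not from a crude upper bound on $\E_3(A,A,S_1)$.

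Your proposal therefore has two genuine gaps. The ``principal technical obstacle'' you name---a rectangular Lemma~\ref{l:action_g}---is not proved and is not needed by the paper. The asymmetric adaptation of Lemma~\ref{l:E_3(A,A,B)} (with $S_1$ popular for $A*A_*$ rather than $A*A$) is likewise unproved; the $\Delta^{-1}$ step in that lemma's proof relies on $B\subseteq\{(A\circ A)\ge\Delta\}$ to insert an extra $(A\circ A)$ factor, and the asymmetric replacement does not obviously yield the same shape. Finally, even granting both adaptations, your route to~(\ref{f:p_main_diff_2}) via $\E_3(A,A,S_1)\le a^2|S_1|\,|A-A|$ produces $d^3|A-A|$ on the left, whereas the stated bound has $d^{(\alpha+1)/2}|A-A|$; the exponents do not match. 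The fix is to drop the popular subset and the full-power weight altogether, as the paper does.
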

\begin{proof}
Because
SzT--types of the sets $A$ and $(-A)$ are coincide  it is sufficient to prove the result for sums.
Let $S=A+A_*$, $|S| = d$, $a=|A|$, $a_* = |A_*|$.
Let also $L=\log (a a_*)$, $c=c(A)$, $c_* = c (A_*)$.
By the Cauchy--Schwarz inequality, we have
\begin{equation}\label{tmp:14.10.2014_3'}
    a a_* = \sum_{x,y} A(x) A_* (y) S(x+y) \le d^{1/2} \E^{1/4} (A) \E^{1/4} (A_*) \,.
\end{equation}
Let us begin with (\ref{f:p_main_diff}).
One can assume that $\E(A) \ge d^{-1} a^2 a_*^2$, the opposite case is similar.
By Lemma \ref{l:SzT}, we get
\begin{equation}\label{tmp:15.10.2014_1}
    \frac{\E_{3/2} (A)}{a} \ge d^{-3/2} c^{-1/2} a^{2-\a/2} a_*^3 \,.
\end{equation}
Denote by $f_j$, $\mu_j$ the eigenfunctions and the eigenvalues of hermitian nonnegative operator
$$
    (\t{\oT}^{S}_{A,A_*} (\t{\oT}^{S}_{A,A_*})^* ) (x,y) = \Cf_3 (A_*,S,S) (x,y) A(x) A(y) \,.
$$
By Lemma \ref{l:eigenvalues_D,S'} we know that $f_1 (x) = A(x) / a^{1/2}$ and $\mu_1 = a a_*$.
Using the lemma again
as well as
bound (\ref{tmp:15.10.2014_1}), we obtain
$$
    \sigma:= \sum_{x,y \in A} \oT^{(A\c A)^{1/2}}_A (x,y) \Cf_3 (A_*,S,S) (x,y)
        =
            \sum_{j=1}^{a} \mu_j \langle \oT^{(A\c A)^{1/2}}_A f_j, f_j \rangle
                =
$$
\begin{equation}\label{tmp:14.10.2014_5'}
                =
                    a^{-1} \mu_1
                        \langle \oT^{(A\c A)^{1/2}}_A A, A \rangle
                            \ge
                                 d^{-3/2} c^{-1/2} a^{3-\a/2} a_*^4 \,.
\end{equation}
On the other hand, we have as in (\ref{tmp:14.10.2014_6}), (\ref{tmp:14.10.2014_7}) that
\begin{equation}\label{tmp:15.10.2014_2}
    \sigma^2 \le \E_3 (A_*,A,A) \E(A,S) \le \E_3 (A_*,A,A) (c a d^{\a+1})^{1/2} \,.
\end{equation}
Using calculations similar to Lemma \ref{l:SzT}, one can show that
\begin{equation}\label{tmp:15.10.2014_3}
    \E_3 (A_*,A,A)
        \ll
            (c_* c^2)^{1/3} (a_* a^2)^{\a/3} L \,.
\end{equation}
Substituting (\ref{tmp:15.10.2014_3}) into (\ref{tmp:15.10.2014_2})
and combining the result with (\ref{tmp:14.10.2014_5'}), we obtain
$$
    d^{-3} c^{-1} a^{6-\a} a_*^8
        \le
            (c a d^{\a+1})^{1/2} (c_* c^2)^{1/3} (a_* a^2)^{\a/3} L \,.
$$
After some calculations, we have
$$
    d \gg L^{-\frac{2}{7+\a}} c_*^{-\frac{2}{3(7+\a)}} c^{-\frac{13}{3(7+\a)}} a_*^{\frac{2(24-\a)}{3(7+\a)}} a^{\frac{33-10\a}{3(7+\a)}}
$$
as required.

To prove (\ref{f:p_main_diff_new}), returning to (\ref{tmp:14.10.2014_3'})
and applying Lemma \ref{l:SzT_1}, we obtain
\begin{equation*}\label{tmp:20.10.2014_3}
    ( a^2 a_*^2 d^{-1} )^{2\a-1}
        \le
            \E^{2\a-1} (A_*, A)
                \ll_{(\a-1)^{-1}}
                                \left( \sum_x (A_* \c A_*)^{1/2} (x) (A\c A) (x) \right)^{2\a-2}
        \cdot
            c^{1/3}  c^{\a/3}_*  a^{2/3} a_*^{\a^2/3} \,.
\end{equation*}
Thus
\begin{equation}\label{tmp:20.10.2014_3}
    a^{-1} \langle \oT^{(A_* \c A_*)^{1/2}}_A A, A \rangle
        \gg_{(\a-1)^{-1}}
            a^{\frac{3\a-1}{3(\a-1)}} a_*^{\frac{12\a-6-\a^2}{6(\a-1)}} d^{\frac{1-2\a}{2(\a-1)}} c^{-\frac{1}{6(\a-1)}} c_*^{-\frac{\a}{6(\a-1)}} \,.
\end{equation}
After that use previous arguments replacing $\oT^{(A \c A)^{1/2}}_A$
onto $\oT^{(A_* \c A_*)^{1/2}}_A$.
By (\ref{tmp:20.10.2014_3}), we have
$$
    a^{\frac{12\a-8}{3(\a-1)}} a_*^{\frac{18\a-12-\a^2}{3(\a-1)}} d^{\frac{1-2\a}{\a-1}} c^{-\frac{1}{3(\a-1)}} c_*^{-\frac{\a}{3(\a-1)}}
        \le
    \left( \mu_1 a^{-1} \langle \oT^{(A_* \c A_*)^{1/2}}_A A, A \rangle \right)^2
        \le
    \E_3 (A_*,A,A) \E(A_*,S)
        \le
$$
$$
        \le
            (c_* a_* d^{\a+1})^{1/2} (c_* c^2)^{1/3} (a_* a^2)^{\a/3} L \,.
$$
After some calculations, we get the required bound.

Finally,
to get
(\ref{f:p_main_diff_2}) just apply the arguments above to get
$$
    a_*^2 \E^2_{3/2}
        \le (c a d^{\a+1})^{1/2} (c_* c^2)^{1/3} (a_* a^2)^{\a/3} L
$$
and use the lower bound $\E^2_{3/2} (A) \ge a^6 / |A-A|$.
Of course, one can replace $A$ to $A_*$ in (\ref{f:p_main_diff_2}) and vice versa.
This completes the proof.
$\hfill\Box$
\end{proof}

\bigskip

Theorem above gives us a consequence to sumsets/difference sets for convex sets.

\begin{corollary}
    Let $A,A_*\subset \R$ be two convex sets.
    Then
$$
    |A \pm A_*| \gg
    \max\{ |A|^{\frac{8}{9}} |A_*|^{\frac{2}{3}},
           |A_*|^{\frac{8}{9}} |A|^{\frac{2}{3}}
        \}
            \cdot \log^{-\frac{2}{9}} (|A||A_*|) \,.
$$
\end{corollary}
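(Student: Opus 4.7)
The plan is to derive the corollary directly from Theorem \ref{t:main_diff} by specializing to the convex case. The first step is to recall from item (1) of the Examples in Section \ref{sec:preliminaries} that every convex set $A \subset \R$ is of Szemer\'{e}di--Trotter type with parameter $\alpha = 2$ and constant $c(A) = |A|$. Consequently, both $A$ and $A_*$ satisfy the hypotheses of Theorem \ref{t:main_diff} with the common parameter $\alpha = 2$, and the $\alpha=2$ conclusion --- namely bound (\ref{f:p_main_diff_1}) --- becomes immediately applicable.

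The second step is a pure substitution. I would plug $c(A) = |A|$ and $c(A_*) = |A_*|$ into the two outer branches of the outer $\max$ in (\ref{f:p_main_diff_1}): the term $c(A_*)^{-1/3} c(A)^{-2/9} |A_*|^{11/9} |A|^{8/9}$ collapses to $|A_*|^{8/9} |A|^{2/3}$, and its symmetric partner $c(A)^{-1/3} c(A_*)^{-2/9} |A|^{11/9} |A_*|^{8/9}$ collapses to $|A|^{8/9} |A_*|^{2/3}$. These are exactly the two terms displayed inside the $\max$ in the corollary, and the log factor $\log^{-2/9}(|A| |A_*|)$ is inherited verbatim.

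The only loose end --- and certainly not a serious obstacle --- is to verify that the inner $\min$ branch of (\ref{f:p_main_diff_1}) does not dominate. After substitution that branch reduces to $\min\{|A|^{14/9},\, |A_*|^{14/9}\}$, and an easy exponent comparison (assuming without loss $|A| \ge |A_*|$) gives $|A|^{8/9} |A_*|^{2/3} \ge |A_*|^{14/9}$, so the outer branches really are the dominant ones. All the analytic content of the argument (the eigenvalue method, Lemmas \ref{l:SzT}--\ref{l:E_3(A,A,B)}, and the manipulation of $\Cf_3$) sits inside Theorem \ref{t:main_diff}; the corollary itself is pure arithmetic on exponents once the convex values of $c(\cdot)$ are inserted.
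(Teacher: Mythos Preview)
Your proposal is correct and matches the paper's approach exactly: the paper simply states that the corollary is ``a consequence'' of Theorem \ref{t:main_diff}, and you have carried out the substitution $\alpha=2$, $c(A)=|A|$, $c(A_*)=|A_*|$ that the paper leaves implicit. One small remark: the check that the inner $\min$ branch of (\ref{f:p_main_diff_1}) does not dominate is unnecessary for the corollary as stated --- since the outer $\max$ already furnishes each of its arguments as a valid lower bound, dropping the third argument can only weaken the conclusion --- though your observation does confirm that nothing is lost in doing so.
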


In another corollary we
obtain
Theorem 1.3 from \cite{Li2}
as well as Corollary 1.4 from the paper.
These results can be considered as theorems on lower bounds for sums of SzT--type sets of special form.


\begin{corollary}
We have
\begin{equation}\label{tmp:19.10.2014_1}
    |A+f(A)| \gg \frac{|A|^{24/19}}{(\log |A|)^{2/19}}
\end{equation}
for any strictly convex or concave function $f$.
Further
\begin{equation}\label{tmp:19.10.2014_2}
    |AA|^6 |A-A|^5 \gg \frac{|A|^{14}}{\log^2 |A|} \,.
\end{equation}
In particular
$$
    \max\{ |AA|, |A-A| \} \gg |A|^{14/11} \log^{-2/11} |A| \,.
$$
\end{corollary}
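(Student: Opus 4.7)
The plan is to deduce both parts of the corollary from Theorem~\ref{t:main_diff} by identifying the correct SzT--type parameters $c(\cdot)$ in each setting. All of the heavy lifting sits in Theorem~\ref{t:main_diff}; the corollary is essentially a bookkeeping exercise once we insert the bounds for $c(A)$ coming from Examples~2 and~3 of Section~\ref{sec:preliminaries}.

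For (\ref{tmp:19.10.2014_1}) I set $A_{*}=f(A)$. By Example~2, both $A$ and $f(A)$ are of SzT--type with parameter $\alpha=2$, and the definition of $q$ in (\ref{f:q(A)}) (applied with $C=A$ and with $C=f(A)$ respectively) yields the self--referential bounds
\[
    c(A)=q(f(A))\le\frac{|A+f(A)|^{2}}{|A|},\qquad c(f(A))=q(A)\le\frac{|A+f(A)|^{2}}{|A|}.
\]
Substituting these into the first entry of the outer $\max$ in (\ref{f:p_main_diff_1}) (which, since $|A_{*}|=|A|$, reduces to $c(f(A))^{-1/3}c(A)^{-2/9}|A|^{19/9}$) gives a lower bound of order $(|A+f(A)|^{2}/|A|)^{-5/9}|A|^{19/9}$, so that
\[
    |A+f(A)|\gg|A+f(A)|^{-10/9}\,|A|^{24/9}\,\log^{-2/9}|A|,
\]
which rearranges to (\ref{tmp:19.10.2014_1}).

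For (\ref{tmp:19.10.2014_2}) I take $A_{*}=A$ with $\alpha=2$ in (\ref{f:p_main_diff_2}) with the minus sign; Example~3 supplies $c(A)\le|AA|^{2}/|A|$. The inequality becomes
\[
    |A-A|^{5/2}\gg|A|^{11/2}\,c(A)^{-3/2}\,\log^{-1}|A|,
\]
and inserting the multiplicative bound for $c(A)$ and squaring yields $|AA|^{6}|A-A|^{5}\gg|A|^{14}\log^{-2}|A|$, i.e.\ (\ref{tmp:19.10.2014_2}). The final assertion on $\max\{|AA|,|A-A|\}$ is then immediate from $\max\{|AA|,|A-A|\}^{11}\ge|AA|^{6}|A-A|^{5}$.

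The only point requiring care is that the $c(\cdot)$--bounds from Examples~2 and~3 involve precisely the sumset/product set that we are trying to bound from below, so after substitution the unknown sits on both sides of the inequality and one has to solve a short exponent equation. In (\ref{f:p_main_diff_1}) one must also choose the branch of the $\max$ that remains symmetric under the substitution $|A_{*}|=|A|$ with the common upper bound for $c(A)$ and $c(A_{*})$, but this is automatic with the first entry I have used. There is no genuine obstacle beyond this arithmetic.
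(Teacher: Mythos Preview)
Your proposal is correct and follows exactly the paper's own argument: apply (\ref{f:p_main_diff_1}) with $A_{*}=f(A)$ and the self--referential bounds $c(A)=c(f(A))\le |A+f(A)|^{2}/|A|$ for the first inequality, and (\ref{f:p_main_diff_2}) with $A_{*}=A$, $\alpha=2$ together with $c(A)\le |AA|^{2}/|A|$ for the second. The only cosmetic difference is that you single out one branch of the $\max$ in (\ref{f:p_main_diff_1}), whereas with $|A_{*}|=|A|$ and a common bound for $c(A),c(A_{*})$ all four entries collapse to the same expression $c^{-5/9}|A|^{19/9}$, so the choice is immaterial.
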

\begin{proof}
Indeed, to obtain (\ref{tmp:19.10.2014_1}) just  apply (\ref{f:p_main_diff_1}) with $A=A$, $A_* = f(A)$ and
$c(A) = c_* (A) = |f(A)+A|^2 |A|^{-1}$.
To get (\ref{tmp:19.10.2014_2}), we use (\ref{f:p_main_diff_2}) with $A=A_*$ having
$$
    |A-A|^{5/2} \gg |A|^{11/2} c^{-3/2} (A) \log^{-1} |A| \,.
$$
After that recall
$c(A) = M^2 |A|$ with $M=|AA|/|A|$.
This concludes the proof.
$\hfill\Box$
\end{proof}

\bigskip

\noindent{I.D.~Shkredov\\
Steklov Mathematical Institute,\\
ul. Gubkina, 8, Moscow, Russia, 119991}
\\
and
\\
IITP RAS,  \\
Bolshoy Karetny per. 19, Moscow, Russia, 127994\\
{\tt ilya.shkredov@gmail.com}

\end{document}